\documentclass[leqno]{amsart}
\usepackage{amsmath}        
\usepackage{amsfonts}
\usepackage{enumerate}
\usepackage{amsthm}
\usepackage{amssymb}
\usepackage{amscd}
\usepackage[all]{xy}
\usepackage[titletoc,toc,title]{appendix}

\newtheorem{thm}{Theorem}[section] 


\newtheorem{cor}[thm]{Corollary}
\newtheorem{lemma}[thm]{Lemma}

\theoremstyle{definition}

\theoremstyle{definition}

\newtheorem*{rem}{Remark}

\setlength{\textwidth}{6.5in}
\setlength{\oddsidemargin}{0.0in}
\setlength{\evensidemargin}{0.0in}    

\setcounter{secnumdepth}{5}
\setcounter{tocdepth}{1}

\pagestyle{plain}     

\def\sk1{\vskip 10pt}
\def\newline{\hfil\break}

\def\del{\partial}

\def\sk{\vskip}

\numberwithin{equation}{section}

\title{Semistability and $CAT(0)$ geometry}

\author{Ross Geoghegan}

\address{\noindent Ross Geoghegan, Department of Mathematical Sciences,
Binghamton University (SUNY),
Binghamton, NY 13902-6000, USA}
\vskip 2pt

\email{ross@math.binghamton.edu}

\subjclass[2010]{Primary 20F65; Secondary 57M07}

\date{March 16, 2017}

\keywords{$CAT(0)$ space, semistable, geodesic ray, boundary}

\begin{document}
\fontsize{12}{13pt} \selectfont  

\begin{abstract}
We explain why semistability of a one-ended proper $CAT(0)$ space can be determined by the
geodesic rays. This is applied to boundaries of $CAT(0)$ groups.
\end{abstract}

\maketitle

\section{Geometry}

Let $X$ be a proper $CAT(0)$ space having one end, and let $\del X$
denote its compactifying boundary. One says that $X$ has {\it semistable
fundamental group at infinity} (or {\it has one strong end})  if any two proper rays in $X$ are
properly homotopic.  A point of $\del X$ is, by definition, an equivalence class
of geodesic rays in $X$ any two of which are boundedly close. (See \cite
{BrHa99} for $CAT(0)$ matters.) Since geodesic rays are proper one may
ask if semistability can be determined by the geodesic rays alone.

\begin{thm}\label{main} The one-ended proper $CAT(0)$ space $X$ has semistable
fundamental group at infinity if and only if any two geodesic rays in $X$ are properly homotopic.
\end{thm}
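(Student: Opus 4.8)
The forward direction is immediate: geodesic rays are proper rays, so if any two proper rays are properly homotopic then in particular any two geodesic rays are. For the converse, assume every two geodesic rays in $X$ are properly homotopic. I would then show that every proper ray $\omega$ is properly homotopic to a geodesic ray; granting this, the hypothesis (applied to two such geodesic rays) shows that any two proper rays are properly homotopic, i.e.\ $X$ is semistable.

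Some soft preliminaries come first, all using that $X$ is simply connected and uniquely geodesic. A free proper homotopy of rays with a common initial point can be taken rel that point (the basepoint trace is a loop, which bounds a disc), and replacing a proper ray on a compact parameter interval by the geodesic segment joining its two endpoint-values gives a properly homotopic ray (a homotopy rel endpoints, supported in a ball). Choosing $0=t_0<t_1<\cdots\to\infty$ with $\operatorname{diam}\omega([t_i,t_{i+1}])\le 1$ (possible by continuity and properness of $\omega$), I may thus assume $\omega=\sigma_1\sigma_2\cdots$ is a \emph{broken geodesic ray}: each $\sigma_i=[p_{i-1},p_i]$ is geodesic of diameter $\le 1$, $p_i=\omega(t_i)$, $p_0=x_0$ a chosen basepoint, and $d(x_0,p_i)\to\infty$. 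I also record a lemma: if $\sigma$ is a geodesic segment of length $\le 1$ from $q$ to $q'$ and $\gamma$ is a geodesic ray from $q'$, then the concatenation $\sigma\gamma$ remains within bounded distance of the geodesic ray $\delta$ from $q$ asymptotic to $\gamma$ (convexity of the metric along asymptotic rays), so the $CAT(0)$ geodesic homotopy -- at each $t$, the point dividing $[\sigma\gamma(t),\delta(t)]$ in a fixed ratio -- is a proper homotopy rel $\{q\}$ from $\sigma\gamma$ to the geodesic ray $\delta$; it is continuous because geodesics depend continuously on their endpoints, and proper because both rays tend to infinity.

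For the main construction, pick for each $i$ a geodesic ray $\gamma_i$ from $p_i$ (these exist since $X$ is proper and unbounded) and put $\mu_i:=\sigma_1\cdots\sigma_i\,\gamma_i$, so that $\mu_0=\gamma_0$ is a geodesic ray and $\mu_i$ agrees with $\omega$ on $[0,t_i]$. The tails of $\mu_i$ and $\mu_{i-1}$ beyond $t_{i-1}$ are $\sigma_i\gamma_i$ and $\gamma_{i-1}$, two proper rays issuing from $p_{i-1}$; by the lemma, $\sigma_i\gamma_i$ is properly homotopic rel $\{p_{i-1}\}$ to a geodesic ray $\delta_i$ from $p_{i-1}$, and by the hypothesis (and the first preliminary) the two geodesic rays $\delta_i$ and $\gamma_{i-1}$ from $p_{i-1}$ are properly homotopic rel $\{p_{i-1}\}$; hence $\mu_i$ and $\mu_{i-1}$ are properly homotopic by a homotopy that is constant on $[0,t_{i-1}]$. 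Concatenating these, the $i$-th one performed near parameter level $1-2^{-i}$, produces a homotopy $\Psi$ from the geodesic ray $\gamma_0$ to $\omega$; it is well defined and continuous because on any compact parameter set all but finitely many of the pieces are constant, so the concatenation stabilizes there.

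The main obstacle -- the step I expect to require real work -- is to arrange that $\Psi$ is \emph{proper}. This forces the data to be chosen with care: the geodesic rays $\gamma_i$ must be chosen so that $d(x_0,\gamma_i)\to\infty$ (one must check that, because $d(x_0,p_i)\to\infty$, some geodesic ray from $p_i$ stays correspondingly far from $x_0$), and, more delicately, the proper homotopies supplied by the hypothesis between $\delta_i$ and $\gamma_{i-1}$ must be chosen so as to stay far from $x_0$, uniformly enough that for each fixed $R$ the $i$-th piece of $\Psi$ misses $\bar B(x_0,R)$ for all large $i$. Establishing this uniform escape is exactly where convexity of the metric, continuity of nearest-point projection onto balls, and one-endedness of $X$ (which keeps the region near infinity connected, so that homotopies of rays can be routed away from $x_0$) all enter. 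Once $\Psi$ is shown to be proper, $\omega$ is properly homotopic to the geodesic ray $\gamma_0$, and the proof is complete.
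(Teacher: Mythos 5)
Your forward direction and your overall target (every proper ray is properly homotopic to a geodesic ray) are both right, and the preliminary reductions---basing the homotopies, replacing $\omega$ by a broken geodesic ray, and the convexity lemma identifying $\sigma\gamma$ with the asymptotic geodesic ray $\delta$ up to a proper straight-line homotopy---are sound. But the gap is exactly where you flag it, and it is not a technicality that convexity, nearest-point projection, or one-endedness will fill: it is the entire content of the theorem. For the infinite concatenation $\Psi$ to be proper you need, for each $R$, that for all sufficiently large $i$ the chosen proper homotopy between the two geodesic rays $\delta_i$ and $\gamma_{i-1}$ issuing from $p_{i-1}$ misses $\bar B(x_0,R)$ \emph{entirely}: the nonconstant part of $H_i$ lives over $t\ge t_{i-1}$ and $t_{i-1}\to\infty$, so if infinitely many $H_i$ meet the ball then $\Psi^{-1}(\bar B(x_0,R))$ is unbounded. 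The hypothesis hands you, for each pair of geodesic rays, \emph{some} proper homotopy, with no control whatever on how close to $x_0$ it comes; each $H_i$ could sweep right through $x_0$, and there is no compactness in the space of proper homotopies from which to extract uniformity. Upgrading ``any two geodesic rays are properly homotopic'' to ``geodesic rays based far out are properly homotopic through homotopies that stay far out'' is essentially the semistability condition itself, so the argument as proposed begs the question at its crucial step. (A secondary, more minor issue: the existence of geodesic rays $\gamma_i$ from $p_i$ with $d(x_0,\gamma_i)\to\infty$ also requires proof, since geodesics in a $CAT(0)$ space need not extend and asymptotic rays can swing back near $x_0$.)

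The paper sidesteps this uniformity problem entirely by shape-theoretic means, and this is why its proof looks nothing like yours. The hypothesis says $\del X$ has a single component of joinability; the Geoghegan--Krasinkiewicz dichotomy (Theorem \ref{kras}) then forces every strong shape component to be non-empty, i.e.\ every proper ray is properly homotopic to one converging to a point $p\in\del X$; and Lemma \ref{LC1} identifies any such ray with the geodesic ray ending at $p$. The hard work you are missing---in particular, dealing with proper rays that converge to no point of $\del X$ at all, a phenomenon your construction never confronts---is packed into Theorem \ref{kras}, a nontrivial theorem about metric compacta that is not recoverable from the soft $CAT(0)$ estimates you list. To salvage a direct proof along your lines you would first have to derive the uniform escape statement from the pointwise one, and no argument for that is offered.
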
 

\begin{rem} If it is the case that any two geodesic rays in $X$ are
properly homotopic through geodesic rays (i.e. every level of the homotopy a geodesic ray), then
$\del X$ is path
connected. Theorem \ref{main} describes something weaker than path
connectedness.  \end{rem}

An inverse sequence of groups $\{H_{n}, f^{m}_{n}\}$ is said to be {\it
semistable} (or {\it Mittag-Leffler}) if, for each $n$, the images of
the bonding homomorphisms $f^{m}_{n}:H_{m}\to H_{n}$ are the same for all
but finitely many values of $m>n$. The relevance is this: Let $\{K_{n}\}$
be an exhausting sequence of compact subsets of $X$ such that, for all
$n$, $K_{n}$ is a subset of the interior of $K_{n+1}$. Choose a suitably parametrized proper
base ray $\omega $ in $X$ and consider the inverse sequence of groups
$\{\pi _{1}(X-K_{n}, \omega), \text{(inclusion)}_{\#}\}$. This sequence of groups
is semistable if and only if any two proper rays in $X$ are properly
homotopic; see \cite{G}, Section 16.1 for details. This explains the
terminology above.

\begin{rem} It has long been known (\cite {K}) that if a
metrizable compact connected space (e.g. $\del X$) is locally
connected\footnote{Equivalently,  if it is a Peano space,  i.e. the
continuous image of a closed interval.} then it has a shape theoretic property called ``pointed
1-movable". Our $\del X$ has this property if and only if $X$ has semistable
fundamental group at infinity. So Theorem \ref{main} is mainly of interest
when $\del X$ is not known to be locally connected.  \end{rem}

\begin{rem} That theorem of Krasinkiewicz \cite {K} says
more: A metrizable compact connected space is pointed 1-movable {\it if and
only if it is shape equivalent to} a locally connected compact connected
metrizable space. This applies to our $\del X$.
\end{rem}

\begin{rem} Any compact metrizable space can be the boundary of a proper
$CAT(0)$ space\footnote{For a sketched proof of this observation of
Gromov see the Appendix to \cite {GO07}.}, so there are many proper
$CAT(0)$ spaces $X$ which do not have semistable fundamental group at
infinity. For example, a dyadic solenoid can be such a boundary. 
\vskip 5pt
{\bf Open Question $1$:} {\it Is it true that when the full isometry group of $X$ acts
cocompactly on $X$ then $X$ has semistable fundamental group at infinity?}
\vskip 5pt
(Compare this with the better-known group theoretical Open Question $2$ posed in Section $2$.) 
\vskip 5pt

We note that a necessary condition for the full isometry
group of $X$ to act cocompactly is that the Lebesgue covering dimension
and the cohomological dimension of $\del X$ be the same \cite {GO07}.
\end{rem}

We write $\widehat X$ for the compact absolute retract (AR) $X\cup \del
X$. It is easy to see that the set $\del X$ is a $Z$-set in $\widehat
X$. Recall that this means that for any open set $U$ in $\widehat
X$ the inclusion map $U\cap X\to U$ is a homotopy
equivalence\footnote{Less formally: for any positive $\epsilon $, any map
of a polyhedron $P$ into $\widehat X$ can be $\epsilon $-homotoped off
$\del X$, holding fixed an arbitrarily large closed subset of $P$ lying in the pre-image
of $X$.}. So shape theory can be directly applied to the metric compactum $\del X$ as it sits
naturally in the AR $\widehat X$.

We need some shape theoretic terminology. 
\begin{enumerate}[(1)]
\item A {\it strong shape component} of $\del X$ is a proper homotopy class of proper rays in
$X$. 
\vskip 5pt
\item The proper ray $c:[0,\infty )\to X$ {\it ends at the point} $p\in
\del X$ if the map $c$ extends continuously to ${\hat c}:[0,\infty ]\to
\widehat X$ by mapping the point $\infty $ to $p$. 
\vskip 5pt
\item Two points $p$ and $q$ of $\del X$ lie in the same {\it component of
joinability} if there are proper rays in $X$ ending at $p$ and at $q$
which are properly homotopic.

\end{enumerate} 

\begin{rem} For strong shape theory see, for example, Secton 17.7 of \cite{G}. Components of
joinability were introduced in \cite{KM}.
\end{rem}

\begin{lemma}\label{LC1} If two proper rays end at the same point of $\del X $ then they are
properly homotopic.
\end{lemma}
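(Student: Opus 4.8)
The plan is to pass from the two rays to the paths $\hat c_0,\hat c_1\colon[0,\infty]\to\widehat X$ they determine (these exist by the definition of ``ends at $p$''), to homotope one path to the other \emph{inside} the compactum $\widehat X$, and then to ``push'' that homotopy off $\del X$ using the $Z$-set property, in a way that keeps it proper. The tool supplied by the hypothesis that $\del X$ is a $Z$-set in the AR $\widehat X$ is a homotopy $r\colon\widehat X\times[0,1]\to\widehat X$ with $r_0=\mathrm{id}_{\widehat X}$ and $r_t(\widehat X)\subseteq X$ for every $t\in(0,1]$ (a standard reformulation of the $Z$-set condition; cf.\ \cite{G}). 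The homotopy between the two paths is elementary: since $[0,\infty]$ deformation retracts onto the endpoint $\infty$, each $\hat c_i$ is homotopic rel $\{\infty\}$ to the constant path at $p$, so splicing the homotopy for $\hat c_0$ to the reverse of the one for $\hat c_1$ gives a continuous $F\colon[0,\infty]\times[0,1]\to\widehat X$ with $F(\,\cdot\,,0)=\hat c_0$, $F(\,\cdot\,,1)=\hat c_1$ and $F(\infty,s)=p$ for all $s$. (No connectivity of $\widehat X$ is needed here; the contraction of $[0,\infty]$ does everything.)

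I would then define, for $(t,s)\in[0,\infty)\times[0,1]$,
\[
H(t,s)\;=\;r\!\left(F(t,s),\;\frac{s(1-s)}{1+t}\right).
\]
Because the damping factor $\frac{s(1-s)}{1+t}$ is positive exactly when $t<\infty$ and $0<s<1$, the point $H(t,s)$ lies in $X$ at all such parameters by the defining property of $r$; and on the two edges $s=0,1$ the map $H$ equals $\hat c_0$ and $\hat c_1$, which take values in $X$ for finite $t$. So $H$ is a genuine map $[0,\infty)\times[0,1]\to X$ restricting to $c_0$ and $c_1$ on the ends (even though $F$ itself may meet $\del X$ in the interior). For properness: the same formula extends continuously over $[0,\infty]\times[0,1]$ with constant value $r(p,0)=p$ on the edge $\{\infty\}\times[0,1]$; call this extension $\widehat H$. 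If $K\subseteq X$ is compact, then $K$ and the compact set $\del X$ are a positive distance $\delta$ apart in $\widehat X$, so $\widehat H^{-1}(\{z:d(z,\del X)<\delta\})$ is an open set containing the compact edge $\{\infty\}\times[0,1]$, hence containing $(T,\infty]\times[0,1]$ for some $T<\infty$; its image misses $K$, so $\widehat H^{-1}(K)$ is a compact subset of $[0,T]\times[0,1]$ and therefore $H^{-1}(K)=\widehat H^{-1}(K)$ is compact. Thus $H$ is a proper homotopy from $c_0$ to $c_1$.

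The one place where care is needed is the interaction of the pushing homotopy with properness: composing $F$ with $r_1$ outright would drive the whole ``edge at infinity'' into $X$, producing a map that no longer leaves every compact set, so the essential device is the damping factor that vanishes as $t\to\infty$. Everything else---the construction of $F$, and the remark that compact subsets of $X$ stay away from $\del X$---is routine.
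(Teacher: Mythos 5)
Your proof is correct, but it takes a genuinely different route from the paper's. The paper argues locally at the point $p$: it chooses a basic system of neighborhoods $W_n\subseteq V_n\subseteq U_n$ of $p$ in the AR $\widehat X$ (using local simple connectivity), joins $c(n)$ to $c'(n)$ by small paths, fills the resulting small loops by disks in $U_{n-1}$, pulls those finitely many disks off $\del X$ one stage at a time using the $Z$-set property, and assembles the pieces into a proper ``ladder'' homotopy. You instead work globally: you cone both extended rays $\hat c_0,\hat c_1$ to the constant map at $p$ inside $\widehat X$ (using only the contractibility of $[0,\infty]$ rel $\infty$), and then push the whole square off $\del X$ in one stroke via the instantaneous deformation $r$ with $r_0=\mathrm{id}$ and $r_t(\widehat X)\subseteq X$ for $t>0$, with the damping factor $s(1-s)/(1+t)$ doing exactly the right job: it keeps the edges equal to $c_0,c_1$, puts the interior into $X$, and vanishes as $t\to\infty$ so that the extension $\widehat H$ sends the edge at infinity to $p$, from which your compactness/tube-lemma argument gives properness cleanly. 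The one external input you rely on is the equivalence of the paper's definition of $Z$-set with the existence of such an instantaneous homotopy $r$; this is indeed a standard fact for closed subsets of compact ANRs, and in the present $CAT(0)$ setting $r$ can even be written down directly by geodesic contraction toward a basepoint, so the reliance is harmless. What each approach buys: yours is shorter, formulaic, and avoids all neighborhood bookkeeping, at the price of invoking (or constructing) the instantaneous-push formulation; the paper's ladder argument uses only the weaker ``$\epsilon$-push rel a compactum'' form of the $Z$-set property together with local simple connectivity, and is the template that generalizes to situations where no global instantaneous deformation is available.
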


\begin{proof} Let the proper rays $c$ and $c'$ end at the point
$p$. AR's are locally simply connected, so we can choose a basic system
of neighborhoods of $p$ in $\widehat X$

\begin{equation*}
\dots \subseteq W_n\subseteq V_n \subseteq U_n=W_{n-1}\subseteq V_{n-1}\subseteq
U_{n-1}\subseteq\dots
\end{equation*}

such that \begin{enumerate}[(i)] \item $[c(n),\infty )\cup [c'(n),
\infty )\subseteq W_n$; \item any two points in $W_n$ can be joined by
a path in $V_n$; \item any loop in $V_n$ is homotopically trivial in
$U_n$.  \end{enumerate} For each $n$ choose a path $\omega _n$ in $V_n$
joining $c(n)$ to $c'(n)$. Choose a trivializing homotopy in $U_{n-1}$
of the loop formed by the segments $[c'(n-1), c'(n)]$,  $[c(n-1), c(n)]$,
$\omega _{n-1}$ and $\omega _n$. Because $\del X$ is a $Z$-set all these
homotopies can be pulled off $\del X$ with the required amount of control. Thus
they can be fitted together to give a proper homotopy between $c$
and $c'$.  
\end{proof}

We denote by $[c]$ the strong shape component of $\del X$ defined by
the proper ray $c$. If some $c'$ in $[c]$ ends at a point of $\del X$
we say that the strong shape component $[c]$ of $\del X$ is {\it non-empty}. In
general there may also be empty strong shape components, meaning examples
of $[c]$ containing no such $c'$. The existence of these is explored
in \cite{GK} where, in particular the following is proved (\cite{GK}
Corollary 3.6 and Theorem 5.1):

\begin{thm}\label{kras} If $\del X$ has an empty strong shape component then it has
uncountably many such, and also uncountably many components of joinability.
\end{thm}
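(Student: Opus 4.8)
The plan is to translate both assertions into statements about $\lim^1$ of a tower of countable groups and then apply the ``trivial-or-uncountable'' dichotomy for such towers. Fix an exhausting sequence $\{K_n\}$ as in the text and a base ray $\omega$. By the analysis of \cite{G}, Section 16.1, the set $\mathcal S$ of strong shape components of $\del X$ is the pointed set $\lim^1\{\pi_1(X-K_n,\omega)\}$, an inverse sequence of \emph{countable} groups (the $X-K_n$ are open, hence second-countable and locally path-connected). If $\del X$ has an empty strong shape component then $X$ is not semistable: a semistable $X$ has exactly one strong shape component, and since $X$ is one-ended $\del X\neq\emptyset$, so that component contains a geodesic ray and is therefore non-empty --- contradiction. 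Thus the tower is not Mittag-Leffler, and by \cite{G} its $\lim^1$, i.e.\ $\mathcal S$, is uncountable. Finally, Lemma \ref{LC1} shows that each $p\in\del X$ determines a well-defined component $s(p)\in\mathcal S$ (the class of any ray ending at $p$), that the non-empty components are exactly the values $s(p)$, and that $p,q$ are joinable iff $s(p)=s(q)$; hence the components of joinability are the fibres of $s\colon\del X\to\mathcal S$ and correspond bijectively to the non-empty strong shape components. Writing $\mathcal S=\mathcal S_{\mathrm e}\sqcup\mathcal S_{\mathrm{ne}}$, it remains to prove that each summand is uncountable, knowing $\mathcal S_{\mathrm e}\neq\emptyset$.

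For the empty components I would prove a \emph{relative} version of the dichotomy. The first step is to express ``the class $[c]$ is non-empty'' as a Mittag-Leffler-type condition on the $\lim^1$-representative of $c$, measured against a neighbourhood basis in $\widehat X$ of a point of $\del X$ refining $\{X-K_n\}$; this is where Lemma \ref{LC1} and the $Z$-set property of $\del X$ are used, to turn ``$c$ ends at a point'' into a stable, combinatorial condition. Emptiness of a fixed class $\xi_0$ then says precisely that this condition fails at $\xi_0$. One now runs the coset-tree diagonalization that proves the absolute dichotomy, but inside the subtree of $\prod\pi_1(X-K_n,\omega)$ lying over a representative of $\xi_0$: this produces $2^{\aleph_0}$ classes at each of which the condition still fails, i.e.\ $2^{\aleph_0}$ empty components. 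Geometrically one is splicing into $c_0$, on windows indexed by a subset $S\subseteq\mathbb N$, the loops that witness the failure of Mittag-Leffler; distinctness of the resulting rays $c_S$ is the usual bookkeeping, and the point to be checked is that each $c_S$ is still empty.

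That last point --- that countably many bounded splices cannot combine with the wandering of $c_0$ to produce a ray that converges in $\widehat X$ --- is the main obstacle, and is essentially the content of \cite{GK}, Corollary 3.6: in a tower of countable groups, a $\lim^1$-class violating a prescribed Mittag-Leffler-type condition is accompanied by uncountably many such. For the non-empty components (equivalently, the components of joinability) one first observes $|\mathcal S_{\mathrm{ne}}|\geq 2$ --- otherwise $\del X$ is a single component of joinability, so all geodesic rays are properly homotopic, so $X$ is semistable by Theorem \ref{main}, contradicting $\mathcal S_{\mathrm e}\neq\emptyset$ --- and then amplifies this to ``uncountably many'' by the analogous dichotomy applied to the tower governing joinability; equivalently, one uses that non-semistability of $X$ forces $\del X$ to be not pointed $1$-movable and invokes \cite{KM} to the effect that a boundary which is not pointed $1$-movable has uncountably many components of joinability. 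Either route yields the second assertion, and I expect the joinability side to be lighter than the emptiness side, whose heart is the relative diagonalization described above.
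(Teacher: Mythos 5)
You should first be aware that the paper does not prove this statement at all: it is imported verbatim from \cite{GK} (Corollary 3.6 and Theorem 5.1), where it is established for arbitrary metric compacta, as the Remark following the theorem emphasizes. Measured against that, your proposal is, in the end, also a citation dressed as a proof: the two genuinely hard steps are exactly the ones you defer back to \cite{GK}. Concretely, your ``relative diagonalization'' rests on the unproved claim that emptiness of a strong shape component can be recast as a stable Mittag-Leffler-type condition on the tower $\{\pi_1(X-K_n,\omega)\}$; emptiness is the statement that no ray in the class converges in $\widehat X$, and turning that into tower combinatorics (and then checking that the spliced rays $c_S$ stay empty) is precisely the content of \cite{GK}, Corollary 3.6 --- you flag it yourself as the main obstacle and then cite it rather than prove it. Likewise the ``analogous dichotomy applied to the tower governing joinability'' is never exhibited; no such tower is identified, and the uncountability of components of joinability is \cite{GK}, Theorem 5.1, not \cite{KM} (the latter introduces joinability and ties having a single component to pointed $1$-movability, but does not give the uncountable dichotomy). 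What is correct and worth keeping: Lemma \ref{LC1} does make $s\colon\del X\to\mathcal S$ well defined, the components of joinability are the fibres of $s$ and biject with the non-empty classes, and the absolute dichotomy does show $\mathcal S$ itself is uncountable when some class is empty (though countability of $\pi_1(X-K_n)$ does not follow from second countability plus local path-connectedness --- the Hawaiian earring --- one should instead use that $X-K_n$ is open in the AR $X$, hence a separable ANR).

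There is also a circularity you cannot afford inside this paper: to get two non-empty classes you invoke Theorem \ref{main}, but the non-trivial direction of Theorem \ref{main} is deduced in the paper from Theorem \ref{kras}, specifically from its ``no empty strong shape components'' consequence; so Theorem \ref{main} is not available while proving Theorem \ref{kras}. Your fallback --- non-semistability implies $\del X$ is not pointed $1$-movable, hence has many joinability components --- again leans on the external results rather than on an argument. In short, the reduction of ``components of joinability'' to ``non-empty strong shape components'' is sound and matches how the paper uses the theorem, but the uncountability of the empty classes and of the non-empty classes is not established by your sketch; those are exactly the theorems of \cite{GK} that the paper quotes, and if you want a self-contained proof you must supply the arguments of \cite{GK} (which work with towers of compacta/polyhedra approximating $\del X$, not merely with the $\pi_1$-tower of $X$) rather than gesture at a ``relative'' Mittag-Leffler dichotomy.
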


\begin{rem} The proof in \cite{GK} is for compact metrizable spaces in
general, not specifically for boundaries of $CAT(0)$ spaces.  \end{rem}

{\it Proof of Theorem \ref{main}:} The ``only if" direction is
trivial. Assume that any two geodesic rays in $X$ are properly
homotopic. Then $\del X$ has only one component of joinability. By
Theorem \ref{kras}, $\del X$ has no empty strong shape components. So
any strong shape component $[c]$ contains a proper ray $c'$ which ends
at a point $p$ of $\del X$. There is also a geodesic ray ending at $p$,
and, by Lemma \ref{LC1}, it is properly homotopic to  $c'$. It follows
that $\del X$ has only one strong shape component. In other words, $X$
has semistable fundamental group at infinity.  \hfill$\square$

\section{Group Theory}

Let the group $G$ act geometrically (i.e. properly discontinuously and
cocompactly) on the one-ended proper $CAT(0)$ space $X$. Then the discrete
group $G$ is quasi-isometric to $X$, so $G$ has semistable fundamental
group at infinity (in the usual sense) if and only if the same is true of $X$. (See, for example,
Section 18.2 of \cite {G} where quasi-isometry is implicitly proved.) The relevance to group
theory is that in order to show $G$ has semistable fundamental group at
infinity, one need only check the condition on geodesic rays given in
Theorem \ref{main}. And since semistable fundamental group at infinity is a quasi-isometry
invariant, this can be checked on any proper $CAT(0)$ space on which $G$ acts geometrically.

This is of interest because of the following long-studied problem:
\vskip 5pt
{\bf Open Question $2$:} {\it Is it true that every (one-ended) $CAT(0)$ group has semistable
fundamental group at infinity?}
\vskip 5pt 
The theorem of Krasinkiewicz \cite {K} mentioned in Section $1$ implies
that if there exists a $CAT(0)$ group whose boundary does not have
semistable fundamental group at infinity then that boundary is not shape
equivalent to a (connected) locally connected metrizable compact space.
\vskip 5pt

As a corollary to Theorem \ref{main} we have:
\begin{cor} If $\del X$ is path connected then $G$ has semistable fundamental group at infinity.
\end{cor}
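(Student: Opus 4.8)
The goal is to deduce the Corollary from Theorem \ref{main}: if $\del X$ is path connected, then any two geodesic rays in $X$ are properly homotopic, hence $X$ (and therefore $G$) has semistable fundamental group at infinity. So the plan is to reduce "path connected boundary" to "any two geodesic rays properly homotopic," then invoke the theorem and the quasi-isometry invariance discussed at the start of Section 2.

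First I would take two geodesic rays $c, c'$ in $X$, ending at points $p, p' \in \del X$ respectively (every geodesic ray ends at a point of $\del X$, essentially by definition of the compactification $\widehat X = X \cup \del X$). Since $\del X$ is path connected, choose a path $\alpha\colon [0,1] \to \del X$ from $p$ to $p'$. The heart of the matter is to promote this path in $\del X$ to a proper homotopy in $X$ between $c$ and $c'$. The natural device is the $Z$-set property of $\del X$ in the AR $\widehat X$, exactly as exploited in the proof of Lemma \ref{LC1}: one can build a homotopy in $\widehat X$ realizing the path $\alpha$ (for instance, by concatenating $\hat c$, a suitable map of a square into $\widehat X$ carrying one edge to $\alpha$, and $\hat{c}'$), and then push it off $\del X$ with enough control to obtain a proper homotopy in $X$ proper between $c$ and $c'$.

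Alternatively — and more economically — I would argue that path connectedness of $\del X$ forces $\del X$ to have a single component of joinability, then quote Theorem \ref{kras} and Lemma \ref{LC1} to run the same argument as in the proof of Theorem \ref{main}. Concretely: given $p, p' \in \del X$ joined by a path $\alpha$, cover the compact arc $\alpha([0,1])$ by finitely many basic AR-neighborhoods and, as in Lemma \ref{LC1}, use local simple connectivity of $\widehat X$ together with the $Z$-set property to produce, for a chain of intermediate points, proper rays ending at them that are successively properly homotopic; chaining these shows $p$ and $p'$ lie in the same component of joinability. Since $p, p'$ were arbitrary, $\del X$ has exactly one component of joinability, so by Theorem \ref{kras} it has no empty strong shape components, and then the argument of Theorem \ref{main}'s proof gives that it has a single strong shape component — i.e. $X$ has semistable fundamental group at infinity. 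Finally, by the quasi-isometry invariance of semistability recalled at the beginning of Section 2, the same holds for $G$.

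The main obstacle is the step that converts a path in the boundary into a proper homotopy in $X$: one must carry out the $Z$-set "push-off" with uniform enough control along the whole arc $\alpha([0,1])$, not just near its endpoints. The key technical input making this routine is compactness of $\alpha([0,1])$, which lets one reduce to finitely many overlapping neighborhoods and apply the Lemma \ref{LC1} machinery a finite number of times; everything else is bookkeeping of the kind already displayed in the proof of Lemma \ref{LC1}.
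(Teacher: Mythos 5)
Your overall skeleton (reduce to Theorem \ref{main}, then quote quasi-isometry invariance for $G$) is right, but the step that converts a path in $\del X$ into a proper homotopy in $X$ --- which is the entire content of the corollary --- is not correctly handled. In your second route you claim that after covering $\alpha([0,1])$ by finitely many basic neighborhoods you can ``apply the Lemma \ref{LC1} machinery a finite number of times'' to make rays ending at consecutive chain points properly homotopic. That step fails: Lemma \ref{LC1} uses a system of neighborhoods shrinking to a \emph{single} point, and its conclusion requires control going to $0$ through infinitely many stages. For two \emph{distinct} boundary points, however close, no such shrinking system containing both exists, and proximity at one scale does not give proper homotopy of rays ending at them. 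Indeed, if closeness of endpoints implied joinability, then chaining would show that every \emph{connected} boundary has one component of joinability (compact connected metric spaces are $\epsilon$-chainable for every $\epsilon$), hence that every proper $CAT(0)$ space with connected boundary is semistable --- contradicted by the dyadic solenoid example mentioned in Section 1. A boundary path does imply joinability of its endpoints, but only via an infinite refinement (subdivide $\alpha$ at scale going to $0$ and telescope), or via an ``instantaneous'' push of $\widehat X$ off the $Z$-set $\del X$; your first route gestures at this, but a single $\epsilon$-push of the square off $\del X$ only puts points within roughly $\epsilon$ of $\del X$ near the top edge, which is not properness, and the vertical edges cannot be held fixed since they end in $\del X$, so one must also check the pushed-off rays still end at $p$ and $p'$ (then use Lemma \ref{LC1}). ``Enough control'' is precisely the nontrivial part, and your closing claim that compactness reduces everything to finitely many applications and bookkeeping is where the argument breaks.

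There is also a much more economical argument, surely the one intended by the paper's bare ``As a corollary'': use the $CAT(0)$ geometry rather than shape theory. Fix a basepoint $x_0$ and for $\xi\in\del X$ let $\gamma_\xi$ be the geodesic ray from $x_0$ with $\gamma_\xi(\infty)=\xi$. In a proper $CAT(0)$ space the map $(\xi,t)\mapsto\gamma_\xi(t)$ is continuous (cone topology), so a path $\alpha$ in $\del X$ from $p$ to $p'$ gives the homotopy $H(s,t)=\gamma_{\alpha(s)}(t)$, which is automatically proper because $d(x_0,H(s,t))=t$; it is even a homotopy through geodesic rays, matching the Remark after Theorem \ref{main}. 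Any other geodesic ray ending at $p$ is asymptotic to $\gamma_p$, hence properly homotopic to it (bounded geodesic homotopy, or Lemma \ref{LC1}). Thus path connectedness of $\del X$ makes any two geodesic rays properly homotopic, Theorem \ref{main} gives semistability of $X$, and quasi-isometry invariance transfers it to $G$ --- with no need for Theorem \ref{kras} or empty components at all.
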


Many $CAT(0)$ groups are known to have semistable fundamental group at
infinity, and for some of these groups spaces $X$ exist having path connected boundaries.
All one-ended word-hyperbolic groups
have connected and locally connected boundary in the sense of Gromov; such a boundary is path
connected. So when a one-ended word-hyperbolic group acts geometrically on a proper CAT(0)
space $X$ then $\del X$ is path connected. All Artin groups and Coxeter
groups have semistable fundamental group at infinity, but it has been
conjectured that right angled Artin groups which do not split as direct
products of infinite groups cannot have path connected boundaries.

A well-known example due to Croke and Kleiner \cite{CK} gives a $CAT(0)$ group $G$,
actually a right angled Artin group, which has the following properties:
\begin{enumerate}[(1)]
\item $G$ has semistable fundamental group at infinity;
\item $G$ acts geometrically on two proper $CAT(0)$ spaces whose boundaries are not
homeomorphic;
\item Those boundaries are not path connected.
\end{enumerate}

M. Mihalik has asked:
\vskip 5pt
{\bf Open Question $3$:} {\it If $G$ acts geometrically on $CAT(0)$ spaces $X$ and $X'$ and
if $\del X$ is path connected must $\del X'$ be path connected?}
\vskip 5pt
The matter of when boundaries associated with $CAT(0)$ groups are path connected seems to be
not fully understood.  
\vskip 5pt
{\bf Acknowledgment:} I thank Chris Hruska, Mike Mihalik and Kim Ruane for helpful
comments on an earlier draft of this note.

\bibliographystyle{amsalpha}


\begin{thebibliography}{Geo08}

\bibitem[BH99]{BrHa99}
Martin~R. Bridson and Andr{\'e} Haefliger, \emph{Metric spaces of non-positive
  curvature}, Grundlehren der Mathematischen Wissenschaften [Fundamental
  Principles of Mathematical Sciences], vol. 319, Springer-Verlag, Berlin,
  1999. \MR{1744486 (2000k:53038)}

\bibitem[CK00]{CK}
Christopher~B. Croke and Bruce Kleiner, \emph{Spaces with nonpositive curvature
  and their ideal boundaries}, Topology \textbf{39} (2000), no.~3, 549--556.
  \MR{1746908}

\bibitem[Geo08]{G}
Ross Geoghegan, \emph{Topological methods in group theory}, Graduate Texts in
  Mathematics, vol. 243, Springer, New York, 2008. \MR{2365352}

\bibitem[GK91]{GK}
Ross Geoghegan and J{\'o}zef Krasinkiewicz, \emph{Empty components in strong
  shape theory}, Topology Appl. \textbf{41} (1991), no.~3, 213--233.
  \MR{1135099}

\bibitem[GO07]{GO07}
Ross Geoghegan and Pedro Ontaneda, \emph{Boundaries of cocompact proper {${\rm
  CAT}(0)$} spaces}, Topology \textbf{46} (2007), no.~2, 129--137. \MR{2313068
  (2008c:57004)}

\bibitem[KM79]{KM}
J\'ozef Krasinkiewicz and Piotr Minc, \emph{Generalized paths and pointed
  {$1$}-movability}, Fund. Math. \textbf{104} (1979), no.~2, 141--153.
  \MR{551664}

\bibitem[Kra77]{K}
J\'ozef Krasinkiewicz, \emph{Local connectedness and pointed 1-movability},
  Bull. Acad. Polon. Sci. Sé Sci. Math. Astronom. Phys. \textbf{25} (1977),
  no.~12, 1265--1269. \MR{500986}

\end{thebibliography}
\def\cprime{$'$}
\providecommand{\bysame}{\leavevmode\hbox to3em{\hrulefill}\thinspace}
\providecommand{\MR}{\relax\ifhmode\unskip\space\fi MR }
\providecommand{\MRhref}[2]{%
  \href{http://www.ams.org/mathscinet-getitem?mr=#1}{#2}
}
\providecommand{\href}[2]{#2}

\end{document}